\documentclass[a4paper,11pt]{amsart}
\usepackage{fancyhdr}
\usepackage{amsmath}
\usepackage{dsfont}
\usepackage{hyperref}
\usepackage[mathscr]{eucal}
\usepackage[cp1251]{inputenc}
\usepackage[english]{babel}
\usepackage{enumerate,float,indentfirst}
\usepackage{graphicx}
\usepackage{xcolor}
\usepackage{latexsym,a4,mathrsfs,amsthm,amsmath,amssymb,url}
\usepackage{amsfonts}
\usepackage{amssymb}
\usepackage{caption}

\usepackage{stackengine}

\numberwithin{equation}{section}
\newtheorem{lemma}{Lemma}

\newtheorem{corollary}{Corollary}
\newtheorem{proposition}{Proposition}

\begin{document}

\vspace{0in}

\title[Submatrices with the best-bounded inverses]{\bf Submatrices with the best-bounded inverses: \\ an asymptotically tight upper bound for $\mathbb{C}^{n \times 2}$ }

\author[Yu. Nesterenko]{Yuri Nesterenko}
\email{yuri.r.nesterenko@gmail.com}

\begin{abstract}
The long-standing hypothesis formulated by Goreinov, Tyrtyshnikov and Zamarashkin \cite{GTZ1997} has recently been solved affirmatively in the case of real two-column matrices by Sengupta and Pautov \cite{SP2026}. 
In this paper, we consider the complex variant of this problem and prove the asymptotically tight upper bound for spectral norms of the
best-bounded inverse $2 \times 2$ submatrices of an arbitrary complex $n \times 2$ matrix with orthonormal columns.
\end{abstract}

\maketitle

\thispagestyle{empty}
\vspace{-5truemm}
\section{Introduction}

The long-standing hypothesis formulated by Goreinov, Tyrtyshnikov and Zamarashkin \cite{GTZ1997} has recently been solved affirmatively in the case of real two-column matrices by Sengupta and Pautov \cite{SP2026}. 
In this paper, we consider the complex variant of this problem and prove the following statement.

\begin{proposition}\label{pr}
For every $n \geq 3$ and an arbitrary complex $n \times 2$ matrix with orthonormal columns, there exists a $2 \times 2$ submatrix such that the spectral norm of its inverse does not exceed $\sqrt{n / \alpha}$, where $\alpha = 2-2/\sqrt3$. The bound is asymptotically tight. Moreover, equality for a fixed $n$ occurs if and only if $4 \mid n$ and the Hopf images\footnote{The Hopf map $p: \mathbb{C}^2 \rightarrow \mathbb{R}^3$ is given by the formula $p(u, v) = ( \bar{u} v + u \bar{v}, \, i(\bar{u} v - u \bar{v}), \, |u|^2 - |v|^2 )$.} of the rows of the matrix are clustered into four groups of $n / 4$ identical vectors whose endpoints taken one from each cluster form the vertices of a regular tetrahedron centered at the origin.
\end{proposition}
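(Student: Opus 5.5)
By the Hopf map, the problem becomes one about points in $\mathbb{R}^3$. Let $r_1,\dots,r_n\in\mathbb{C}^{2}$ be the rows and $x_k=p(r_k)\in\mathbb{R}^3$. Then $|x_k|=|r_k|^2$, and orthonormality of the columns, $\sum_k r_k^*r_k=I$, is equivalent to
$$\textstyle\sum_k x_k=0,\qquad \sum_k|x_k|=2.$$
For two rows $a,b$ we have $\sigma_1^2+\sigma_2^2=|a|^2+|b|^2$ and $\sigma_1\sigma_2=|\det(a;b)|$. Moreover $2|\det|^2=2(|a|^2|b|^2-|\langle a,b\rangle|^2)=|p(a)||p(b)|-p(a)\cdot p(b)$. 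Hence $\|A_{jk}^{-1}\|^{-2}=\sigma_{\min}^2=:f(x_j,x_k)$ depends only on $x_j,x_k$, through their lengths and the angle $\theta$ between them.

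A direct computation shows that for $|x_j|=|x_k|=m$,
$$f=m\bigl(1-\cos(\theta/2)\bigr).$$
More generally, $f$ is positively $1$-homogeneous and nondecreasing in each length and in $\theta$. Therefore
$$f(x_j,x_k)\ge \min(|x_j|,|x_k|)\,\bigl(1-\cos(\theta_{jk}/2)\bigr).$$
For the regular tetrahedron, $\cos\theta=-1/3$, so $\cos(\theta/2)=1/\sqrt3$, and $1-1/\sqrt3=\alpha/2$. The claim thus reduces to the following geometric statement. For points $x_k$ with zero sum and total length $2$, some pair satisfies
$$\min(|x_j|,|x_k|)\bigl(1-\cos(\theta_{jk}/2)\bigr)\ge \alpha/n,$$
or, more precisely, $f(x_j,x_k)\ge\alpha/n$, the sharp $f$ being needed for the equality analysis.

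\emph{Tightness.} For $n=4m$, put $m$ copies of each vertex of a regular tetrahedron, scaled to length $2/n$. These are the Hopf images of suitable rows, since $p$ is onto. The best pair is one from distinct clusters, and it gives exactly $f=(2/n)(1-1/\sqrt3)=\alpha/n$. For $n$ not divisible by $4$, distribute the rows into four nearly equal clusters and rescale so that the total length is $2$. This gives a best value of $\alpha/n\,(1+O(1/n))$, which is asymptotic tightness.

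\emph{Main step.} For the lower bound I argue by contradiction. Assume $f(x_j,x_k)<\alpha/n$ for all pairs. Sort the points by length and let $H$ be the set of points with $|x_k|\ge t$, for a threshold $t$ to be optimized (morally $t\approx 2/n$). Since $\sum_k x_k=0$, the weighted directions have the origin in their convex hull. I would use the purely spherical fact that if every pairwise angle among a set of directions is below $\arccos(-1/3)$, then the origin is not in their convex hull. This is the tetrahedron extremality; via Carathéodory it reduces to $\le 4$ points. The contradiction hypothesis forces every pair in $H$ to make an angle below a bound $\theta(t)$. The points outside $H$ are short, so their total length is at most $(n-|H|)t$, and their sum can only partly cancel the resultant of $H$. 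Quantitatively, I plan to show that if all directions of $H$ lie in a "small" configuration (pairwise angles $<\theta$), then $\bigl|\sum_{H}x_k\bigr|\ge c(\theta)\sum_H|x_k|$, with the sharp constant attained by near-tetrahedral clusters. Combining this with $\sum_{H^c}|x_k|\ge\bigl|\sum_H x_k\bigr|$ and the budget $\sum_k|x_k|=2$ should yield a contradiction when the threshold is chosen to match the bound $\alpha/n$.

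This balancing — a weighted, quantitative version of the tetrahedral angle lemma that is sharp exactly at four equal clusters — is the main obstacle. If the threshold argument is lossy, I would instead consider the pair $(j,k)$ that maximizes $f$ and perturb lengths and directions (a Lagrange/variational argument on the compact set of admissible configurations) to show that any extremal configuration has all points in at most four directions with equal total weights. An extremal configuration of this kind is then easily computed to be the tetrahedron.

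\emph{Equality.} The equality case follows by tracing equality in each inequality. Equality in the homogeneity bound forces equal lengths within the critical pairs. Equality in the angle lemma forces four directions forming a regular tetrahedron, and equal weights force the clusters to have size $n/4$. In particular $4\mid n$, and this gives exactly the described clustering of Hopf images.
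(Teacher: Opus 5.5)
You have not proved the main inequality; you call it ``the main obstacle.'' Worse, the threshold route you sketch cannot close it, because the geometric statement you reduce to, with $\min(|x_j|,|x_k|)$ in place of the exact $f$, is false. (Your preliminaries are fine: the identity $2|\det|^2=|x_j||x_k|-(x_j,x_k)$, the closed form and monotonicity of $f$, the example for $4\mid n$, and your spherical lemma.)

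Take $n=13$. Put one vector of length $1/2$ toward one vertex of a regular tetrahedron. Put four copies of each of the other three vertex directions, each of length $1/8$. This configuration has zero sum and total length $2$. Every pair has $\min(|x_j|,|x_k|)(1-\cos(\theta_{jk}/2))\le \frac18\cdot\frac{\alpha}{2}=\frac{\alpha}{16}<\frac{\alpha}{13}$. The proposition holds here only because of an unequal-length pair, the long vector with a short one, for which $f=\frac12(\frac58-\sqrt{25/64-1/6})\approx 0.0759>\alpha/13\approx 0.0650$.

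Your threshold argument uses only the constraint ``pairs inside $H_t$ make angle $<\theta(t)$'' plus facts true for every configuration. This example satisfies all of those for every $t$. For $t\le 1/8$, $H_t$ is everything and $\theta(t)$ exceeds the tetrahedral angle. For $t>1/8$, $H_t$ has at most one point. So no choice of threshold gives a contradiction. The exact dependence of $f$ on unequal lengths is needed for the inequality itself, not only for the equality case. The variational fallback is a restatement of the problem: showing that minimizers of the nonsmooth function $\max_{j\neq k} f$ are four equal tetrahedral clusters is the whole content. Your equality paragraph inherits this gap.

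The paper avoids this by encoding the condition exactly instead of bounding $f$ from below.

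\emph{Short rows.} Rows with $\|u_i\|^2\le\alpha/n$ are removed by induction on $n$. After a unitary rotation, delete the row and rescale one column. The loss factor $1-\alpha/n$ is beaten by $n/(n-1)$.

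\emph{Linearization.} When all $|w_i|>\alpha/n$, the larger eigenvalue of $\Gamma_{ij}$ exceeds $\alpha/n$. So $\lambda_2(\Gamma_{ij})<\alpha/n$ is equivalent to $\chi_{ij}(\alpha/n)<0$, that is, $P_{ij}<\tau(\rho_i+\rho_j)$. Here $P_{ij}=|w_i||w_j|-(w_i,w_j)$ and $\rho_i=|w_i|-\alpha/(2n)$, so the inequality is linear in the lengths.

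\emph{Upper bound.} Multiply by $P_{ij}\ge 0$ and sum using $\sum_j P_{ij}=2|w_i|$. This gives $\sum P_{ij}^2<\frac{8\alpha}{n}(R_2-\frac{\alpha}{n})$.

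\emph{Lower bound.} $P=rr^T-G$ with $\mathrm{rank}\, G\le 3$ has at most three negative eigenvalues. Its compression to $\mathrm{Span}(e)^\perp$ has trace $-4/n$. This yields $\sum P_{ij}^2\ge \frac{8R_2}{n}-\frac{32}{3n^2}\ge\frac{8\alpha}{n}(R_2-\frac{\alpha}{n})$, which contradicts the upper bound.

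That eigenvalue count is the weighted, all-points version of your Carath\'eodory/tetrahedral lemma that you were looking for. Tracing equality through it ($R_2=4/n$, the spectrum of $P$, Perron--Frobenius applied to $M$) gives the four clusters of size $n/4$.

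A minor point: for $4\nmid n$ you cannot give all vectors a common length, since unequal clusters would then not sum to zero. Use length $1/(2n_c)$ in a cluster of size $n_c$, or pad with zero rows as the paper does via monotonicity of $b_n$.
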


This yields the following equivalent statement about spatial polygons discussed in \cite{Nesterenko2024}.
\begin{corollary}\label{co}
For arbitrary vectors $w_1, \ldots, w_n, \, n \geq 3$ forming a polygon of perimeter 2 in the $3$-dimensional Euclidean space
\begin{equation*}
\begin{split}
1) \, & \max_{i \neq j} (|w_i| + |w_j| - |w_i + w_j|) \geq 2\alpha/n, \text{ where } \alpha = 2-2/\sqrt3, \\
2) \, &\text{the bound is asymptotically tight,} \\
3) \, & \text{equality holds} \iff 4 \mid n, \text{ the multiset } \{ w_1, \ldots, w_n \} \text{ consists of four} \\ 
& \text{distinct vectors, each occurring exactly } n/4 \text{ times, with endpoints} \\
& \text{forming the vertices of a regular tetrahedron centered at the origin}.
\end{split}
\end{equation*}
\end{corollary}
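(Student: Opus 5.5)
The plan is to derive the corollary from Proposition~\ref{pr} through a dictionary between rows of $n\times 2$ matrices with orthonormal columns and closed spatial polygons. The dictionary is the Hopf map $p$ from the footnote. For a row $r_k=(u_k,v_k)$ put $w_k=p(u_k,v_k)$.

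\textbf{Step 1 (elementary identities for $p$).} First I will record two facts, each a one-line expansion. The first is $|p(u,v)|=|u|^2+|v|^2$, which follows from $|\bar u v|^2=|u|^2|v|^2$. The second is that for two rows $r_i,r_j$
\[
w_i\cdot w_j = 2\,|\langle r_i,r_j\rangle|^2-|r_i|^2|r_j|^2 ,
\]
where $\langle\cdot,\cdot\rangle$ is the standard Hermitian product on $\mathbb{C}^2$.

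\textbf{Step 2 (matrix $\Leftrightarrow$ polygon).} Suppose the columns of $M$ are orthonormal. Then $\sum_k|u_k|^2=\sum_k|v_k|^2=1$ and $\sum_k\bar u_k v_k=0$. Reading off the three coordinates of $p$, this gives $\sum_k w_k=0$, so the $w_k$ form a closed polygon. Its perimeter is $\sum_k|w_k|=2$.

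Conversely, let $w_1,\dots,w_n$ form a closed polygon of perimeter $2$. The restriction of $p$ to the sphere of radius $\sqrt{t}$ maps onto the sphere of radius $t$. Hence each $w_k$ has a preimage $(u_k,v_k)$ with $|u_k|^2+|v_k|^2=|w_k|$. The coordinate equations above then read backwards: the closing condition and the perimeter condition together force $\sum|u_k|^2=\sum|v_k|^2=1$ and $\sum\bar u_kv_k=0$. So the lifted matrix has orthonormal columns. The map is thus onto the set of closed polygons of perimeter $2$. Its fibres are exactly changes of the phase of each row, and these do not affect singular values of submatrices.

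\textbf{Step 3 (smallest singular value of a $2\times2$ submatrix).} Let $A$ be the submatrix on rows $i,j$. The eigenvalues of $AA^*$ are
\[
\lambda_\pm=\tfrac12\Bigl(|r_i|^2+|r_j|^2\pm\sqrt{(|r_i|^2-|r_j|^2)^2+4|\langle r_i,r_j\rangle|^2}\Bigr).
\]
Using Step 1,
\[
|w_i+w_j|^2=|w_i|^2+|w_j|^2+2w_i\cdot w_j=(|r_i|^2-|r_j|^2)^2+4|\langle r_i,r_j\rangle|^2 .
\]
Therefore
\[
\sigma_{\min}(A)^2=\lambda_-=\tfrac12\bigl(|w_i|+|w_j|-|w_i+w_j|\bigr),
\]
and $\|A^{-1}\|_2=\lambda_-^{-1/2}$, with the value $+\infty$ when $A$ is singular. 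It follows that $\|A^{-1}\|_2\le\sqrt{n/\alpha}$ holds if and only if $|w_i|+|w_j|-|w_i+w_j|\ge 2\alpha/n$. Equality in one corresponds exactly to equality in the other.

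\textbf{Step 4 (transfer of the three claims).} Minimising $\|A^{-1}\|_2$ over submatrices is the same as maximising the polygon quantity over pairs $i\ne j$. By Step 2 every polygon arises from some matrix, so part 1) follows from the bound in Proposition~\ref{pr}. Part 2) follows from asymptotic tightness, because the extremal matrices map to polygons with the same value.

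For part 3), the equality configurations of Proposition~\ref{pr} have Hopf images forming four clusters of $n/4$ identical vectors at the vertices of a regular tetrahedron centred at the origin. These images are exactly the polygon vertices $w_k$. Moreover, every polygon is the image of some matrix. The equality characterisations therefore coincide.

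The only step needing genuine care is the identity in Step 3 linking $\lambda_-$ to $|w_i+w_j|$. It rests on the inner-product formula of Step 1, which I would verify by direct expansion.
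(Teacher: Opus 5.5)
Your proposal is correct and follows the same route as the paper, which derives the corollary from Proposition~\ref{pr} by passing to the Hopf images of the rows and cites \cite{HK1997}, \cite{Nesterenko2024} for the correspondence. You fill in the details the paper leaves to those references: the lift of any closed polygon of perimeter $2$ to a matrix with orthonormal columns, the phase invariance of the fibres, and the identity $\lambda_-(\Gamma_{ij})=\tfrac12\bigl(|w_i|+|w_j|-|w_i+w_j|\bigr)$, which is the polygon form of the paper's relation \eqref{sqdot}; all of these check out.
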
 

The correspondence between these two formulations can be established by computing the Hopf images of the rows of the matrix (see \cite{HK1997}, \cite{Nesterenko2024}).

\section{Proof}

We will use the following lemma, whose proof was discovered with assistance from ChatGPT, model GPT-5.4 Pro.
\begin{lemma}\label{le}
Let $w_1,\ldots,w_n\in\mathbb R^3$ satisfy
\begin{equation*}
    \sum_{i=1}^n w_i=0,
    \quad
    \sum_{i=1}^n |w_i|=2.
\end{equation*}
Let $\alpha=2 - 2/\sqrt3$ and let $M=(M_{ij})_{i,j=1}^n$ be the real $n\times n$ matrix
\begin{equation*}
    M_{ij}
    =(w_i, w_j) - (|w_i|-\frac{2\alpha}{n})(|w_j|-\frac{2\alpha}{n}) + \frac{2\alpha^2}{n^2}.
\end{equation*}
Then $M$ is not entrywise positive.
\end{lemma}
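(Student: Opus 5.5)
The plan is to argue by contradiction. Assume every entry $M_{ij}$ is positive, and look for a nonnegative weighting $\lambda_{ij}\ge 0$, not identically zero, such that $\sum_{i,j}\lambda_{ij}M_{ij}\le 0$.

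Set $c=2\alpha/n$ and $a_i=|w_i|-c$. Then
\[
M = G - aa^{T} + \tfrac{c^{2}}{2}J,
\]
where $G=((w_i,w_j))$ is a Gram matrix of rank at most $3$ and $J$ is the all-ones matrix. The first thing to check is the uniform weighting $\lambda_{ij}=1$. Using $\sum_i w_i=0$ and $\sum_i a_i = 2-2\alpha$,
\[
\mathbf 1^{T}M\mathbf 1 = 2\alpha^{2}-(2-2\alpha)^{2}>0 .
\]
So the uniform weighting gives no contradiction, and the weights must be adapted to the configuration.

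The constant $\alpha$ comes from the regular tetrahedron, so I would take weights of the form $\lambda_{ij}=x_ix_j$ with $x$ supported on a cleverly chosen subset. Fix the four unit vectors $e_1,\dots,e_4$ of a regular tetrahedron centred at the origin, rotated by a parameter $R\in SO(3)$. Partition the indices into four classes $S_k$ according to which $Re_k$ makes the smallest angle with $w_i$.

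Positivity of $M$ on a block $S_k\times S_l$ means
\[
(W_k,W_l) > A_kA_l-\tfrac{c^{2}}{2}\,|S_k||S_l|,
\]
where $W_k=\sum_{i\in S_k}w_i$ and $A_k=\sum_{i\in S_k}a_i$. The condition $\sum_k W_k=0$ forces some pairs $k\ne l$ with $(W_k,W_l)$ quite negative. I would sum the six off-diagonal block inequalities together with the positivity of the diagonal blocks. Then I would average over $R$ with respect to Haar measure, which turns the Voronoi geometry into explicit spherical integrals. The goal is to show the averaged right-hand side dominates the left-hand side exactly when $\alpha\le 2-2/\sqrt3$.

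To make the diagonal terms usable, I would bound $|W_k|\le\sum_{i\in S_k}|w_i|$ and exploit the convexity of $t\mapsto (t-c)^{2}$ via Cauchy--Schwarz. This reduces the problem to an inequality in the four cluster masses $m_k=\sum_{i\in S_k}|w_i|$ and the four cluster sizes $|S_k|$. That inequality should be extremal precisely when each $|S_k|=n/4$, each $m_k=1/2$, and the $W_k$ point to the vertices of the tetrahedron; this matches the equality case stated in Proposition~\ref{pr}.

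The main obstacle is the step where the block inequalities are combined. The off-diagonal constraint is a non-convex (indefinite) quadratic condition, because of the $-aa^{T}$ term. The cluster sizes also enter through the $c^{2}|S_k||S_l|/2$ correction, so I need a weighting in which size imbalance only helps.

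If the Voronoi/averaging scheme becomes too messy, the fallback is a smaller certificate. Pick the two indices $i\ne j$ minimising $|w_i|+|w_j|-|w_i+w_j|$ and a third and fourth index forming a near-tetrahedral configuration. Then seek explicit multipliers on the resulting $4\times4$ principal submatrix such that $\sum\lambda_{ij}M_{ij}\le0$, in the style of a copositivity certificate. The remaining work is to verify the multipliers with the constants $2\alpha/n$ and $2\alpha^{2}/n^{2}$, which is routine but computational.
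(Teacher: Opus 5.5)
Your framework (find $\lambda_{ij}\ge0$, not all zero, with $\sum_{i,j}\lambda_{ij}M_{ij}\le0$) is the right one, and your value $\mathbf{1}^TM\mathbf{1}=2\alpha^2-(2-2\alpha)^2=\tfrac43$ is correct. However, the weights you propose cannot produce the contradiction.

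\emph{The Voronoi block scheme fails.} Take $n$ large and $w_i=\tfrac2n u_i$, with unit vectors $u_i$ centrally symmetric and nearly uniformly spread over the sphere. Using $\alpha^2-4\alpha+\tfrac83=0$ one finds $M_{ij}=\tfrac{4}{n^2}\bigl(\cos\theta_{ij}+\tfrac13\bigr)$, so near-antipodal pairs give $M_{ij}\approx-\tfrac{8}{3n^2}<0$. Now take the Voronoi cells of \emph{any} rotated tetrahedron, and let $\bar c\in(0,1)$ be the mean of $(u,e_k)$ over the cell of $e_k$. The block sums are then $B_{kl}=\sum_{i\in S_k,\,j\in S_l}M_{ij}\approx\tfrac{1-\bar c^2}{12}>0$ for $k\ne l$, and $B_{kk}\approx\tfrac{\bar c^2}{4}+\tfrac1{12}>0$. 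So everything your scheme retains is satisfied by an actual configuration: block positivity for every $R$, the two normalisations, and the Voronoi geometry. No nonnegative combination or Haar average of these facts can be contradictory. Indeed, the averaged weight (the probability that $w_i,w_j$ fall in different cells) gives a total $\approx1-\bar c^2>0$.

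\emph{The reduction points the wrong way.} Getting $\sum_{k\ne l}B_{kl}\le0$ requires $\sum_k\bigl(|W_k|^2-A_k^2+\tfrac{c^2}{2}|S_k|^2\bigr)\ge\tfrac43$. That is a \emph{lower} bound on $|W_k|$, whereas $|W_k|\le m_k$ is an upper bound. Adding diagonal blocks only adds positive terms.

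\emph{Rank-one weights fail too.} At the tetrahedral extremizer $M$ is positive semidefinite with $x^TMx>0$ for every nonzero $x\ge0$. So inequalities of the form $x^TMx>0$ are again consistent with the constraints.

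\emph{The fallback is not a reduction.} A nonnegative certificate on a $4\times4$ principal submatrix is equivalent to exhibiting an entry $M_{ij}\le0$ there, which is the lemma itself. Moreover, $M_{ij}=\tfrac12\bigl(|w_i+w_j|^2-(|w_i|+|w_j|-\tfrac{2\alpha}{n})^2\bigr)$. Hence the pair \emph{minimising} $|w_i|+|w_j|-|w_i+w_j|$ is the worst possible choice.

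The missing idea is a pair-dependent weight that is exactly tight at the extremizer. The paper takes $\lambda_{ij}=P_{ij}=|w_i||w_j|-(w_i,w_j)\ge0$. This weight vanishes on the diagonal and on parallel pairs, which is complementary to the zero pattern of $M$ at the tetrahedral clusters. By the two constraints its row sums are $\sum_jP_{ij}=2|w_i|$. Write $M_{ij}=\tau(\rho_i+\rho_j)-P_{ij}$ with $\tau=2\alpha/n$ and $\rho_i=|w_i|-\tau/4$. Positivity of $M$ then gives $F=\sum_{i,j}P_{ij}^2<\tfrac{8\alpha}{n}\bigl(R_2-\tfrac{\alpha}{n}\bigr)$, where $R_2=\sum_i|w_i|^2$.

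The dimension enters through the inertia of $P=rr^T-G$. This matrix has at most three negative eigenvalues, trace zero, $e^TPe=4/n$ and $\|Pe\|^2=4R_2/n$. Interlacing on $\mathrm{Span}(e)^\perp$ therefore gives $F\ge\tfrac{8R_2}{n}-\tfrac{32}{3n^2}$. Together with $R_2\ge4/n$, this contradicts the upper bound.

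For equal lengths this certificate is exactly the frame-potential bound $\sum_{i,j}(u_i,u_j)^2\ge n^2/3$. That bound is asymptotically attained by the near-uniform configuration above, so a coarser weighting like yours has no slack to spare.
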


\textbf{Proof.} Put
\begin{equation*}
    r_i=|w_i|,
    \quad
    \tau=\frac{2\alpha}{n}.
\end{equation*}
Also define
\begin{equation*}
    P_{ij}=r_i r_j-(w_i,w_j).
\end{equation*}
By Cauchy's inequality, $P_{ij}\ge 0$ for all $i,j$, and clearly $P_{ii} = 0$. Moreover, since $\sum_i r_i=2$ and $\sum_i w_i=0$, we have
\begin{equation}\label{Psumm}
    \sum_{j=1}^n P_{ij}
    =r_i\sum_{j=1}^n r_j-\left(w_i,\sum_{j=1}^n w_j\right)
    =2r_i.
\end{equation}

We first rewrite $M_{ij}$ in terms of $P_{ij}$,
\begin{align*}
M_{ij}=&(w_i,w_j)-(r_i-\tau)(r_j-\tau)+\frac{2\alpha^2}{n^2} = \\
&=-P_{ij}+\tau(r_i+r_j)-\frac{\tau^2}{2}.
\end{align*}
Equivalently, if
\begin{equation*}
    \rho_i=r_i-\frac{\tau}{4},
\end{equation*}
then
\begin{equation}
    M_{ij}=\tau(\rho_i+\rho_j)-P_{ij}.
    \label{eq:M-rho-P}
\end{equation}

Assume, for contradiction, that all entries of $M$ are strictly positive. Then \eqref{eq:M-rho-P} gives
\begin{equation*}
    P_{ij}<\tau(\rho_i+\rho_j)
    \qquad\text{for all }i,j.
\end{equation*}
Taking $i=j$, and using $P_{ii}=0$, we get
\begin{equation*}
    0<M_{ii}=2\tau\rho_i,
\end{equation*}
so every $\rho_i$ is positive.

Let
\begin{equation*}
    R_2=\sum_{i=1}^n r_i^2,
    \quad
    F=\sum_{i,j=1}^n P_{ij}^2.
\end{equation*}
Since all $P_{ij}\ge 0$ and there is at least one positive among them, the strict inequality above implies
\begin{align*}
F < &\tau\sum_{i,j=1}^n P_{ij}(\rho_i+\rho_j)
= 2\tau\sum_{i=1}^n \rho_i\sum_{j=1}^n P_{ij} = \\
&= 2\tau\sum_{i=1}^n \rho_i(2r_i) 
= 4\tau\sum_{i=1}^n \rho_i r_i.
\end{align*}
But
\begin{equation*}
    \sum_{i=1}^n \rho_i r_i
    =\sum_{i=1}^n r_i^2-\frac{\tau}{4}\sum_{i=1}^n r_i
    =R_2-\frac{\tau}{2}
    =R_2-\frac{\alpha}{n}.
\end{equation*}
Hence
\begin{equation}
    F<4\tau\left(R_2-\frac{\alpha}{n}\right)
    =\frac{8\alpha}{n}\left(R_2-\frac{\alpha}{n}\right).
    \label{eq:upper-bound}
\end{equation}

We now prove the opposite inequality. Let $r=(r_1,\ldots,r_n)^T$ and let
\begin{equation*}
    G=((w_i,w_j))_{i,j=1}^n.
\end{equation*}
Then
\begin{equation*}
    P = (P_{ij})_{i,j=1}^n = rr^T-G.
\end{equation*}
The matrix $G$ is positive semidefinite and has rank at most $3$, because it is the Gram matrix of vectors in $\mathbb R^3$. Since $rr^T$ is positive semidefinite, matrix $P$ has at most $3$ negative eigenvalues. Also,
\begin{equation*}
    \mathrm tr P=\sum_{i=1}^n P_{ii}=0.
\end{equation*}
Let
\begin{equation*}
e= \frac1{\sqrt n} \mathbf{1} =  \frac1{\sqrt n}(1,\ldots,1)^T \in \mathbb R^{n \times 1}.
\end{equation*}
Using \eqref{Psumm}, we get
\begin{equation}\label{Pe}
    Pe=\frac{2}{\sqrt n}r.
\end{equation}
Consequently,
\begin{equation*}
    s:=e^TPe=\frac4n,
    \qquad
    t:=e^TP^2e=\|Pe\|^2=\frac{4R_2}{n}.
\end{equation*}

Decompose $P$ relative to the orthogonal splitting
\begin{equation*}
    \mathbb R^n= \mathrm{Span}(e)\oplus \mathrm{Span}(e)^\perp.
\end{equation*}
In this decomposition, write
\begin{equation*}
    P= Q
    \begin{pmatrix}
        s & b^T\\
        b & C
    \end{pmatrix} Q^T,
\end{equation*}
where $Q$ is an orthogonal matrix.
Then
\begin{equation*}
    \mathrm tr(P^2)=s^2+2\|b\|^2+\mathrm tr(C^2),
\end{equation*}
while
\begin{equation*}
    t=e^TP^2e=s^2+\|b\|^2.
\end{equation*}
Therefore
\begin{equation}
    F=\mathrm tr(P^2)=2t-s^2+\mathrm tr(C^2).
    \label{eq:F-block}
\end{equation}

The diagonal block $C$ has at most $3$ negative eigenvalues, by the Cauchy interlacing theorem (see \cite{HJ2012}), since $P$ has at most $3$ negative eigenvalues. Also
\begin{equation*}
    \mathrm tr C=\mathrm tr P-s=-s.
\end{equation*}

Thus the sum of the absolute values of the negative eigenvalues of $C$ is at least~$s$. Since there are at most $3$ such eigenvalues,
\begin{equation}\label{eq:trinity}
    \mathrm tr(C^2)\ge \frac{s^2}{3}.
\end{equation}

Using this in \eqref{eq:F-block}, we obtain
\begin{equation}\label{eq:lower-bound-preraw}
    F\ge 2t-s^2+\frac{s^2}{3}
    =2t-\frac{2s^2}{3}.
\end{equation}
Substituting $s=4/n$ and $t=4R_2/n$, this becomes
\begin{equation}
    F\ge \frac{8R_2}{n}-\frac{32}{3n^2}.
    \label{eq:lower-bound-raw}
\end{equation}

Finally, by Cauchy's inequality,
\begin{equation}\label{R2}
    R_2=\sum_{i=1}^n r_i^2
    \ge \frac{\left(\sum_i r_i\right)^2}{n}
    =\frac4n.
\end{equation}
Using $\alpha^2-4\alpha+8/3=0$, we rewrite the right-hand side of \eqref{eq:lower-bound-raw} as
\begin{equation}\label{fsum}
    \frac{8R_2}{n}-\frac{32}{3n^2}
    =\frac{8\alpha}{n}\left(R_2-\frac{\alpha}{n}\right)
    +\frac{8(1-\alpha)}{n}\left(R_2-\frac4n\right).
\end{equation}
Since $0<\alpha<1$ and $R_2\ge 4/n$, it follows that
\begin{equation}
    F\ge \frac{8\alpha}{n}\left(R_2-\frac{\alpha}{n}\right).
    \label{eq:lower-bound-final}
\end{equation}

The strict upper bound \eqref{eq:upper-bound} contradicts the lower bound \eqref{eq:lower-bound-final}. Therefore, $M$ cannot be entrywise strictly positive.

\quad

We now pass to the proof of Proposition \ref{pr}.

Note that the statement for $n = 3$ is trivial. Let us assume that the statement is true for $n-1$ rows and fix the matrix
\begin{equation*}
U = \left( \begin{matrix}
u_{11} & u_{12} \\
u_{21} & u_{22} \\
\dots & \dots \\
u_{n1} & u_{n2}
\end{matrix} \right) \in \mathbb{C}^{n \times 2}, \quad U^H U = I.
\end{equation*}

Similarly to \cite{SP2026}, we consider the following two possibilities.

\textbf{Case A.} At least one row of the matrix $U$ has a small norm. Specifically,
\begin{equation*}
\exists i: |u_{i1}|^2 + |u_{i2}|^2 \leq \frac{\alpha}{n}.
\end{equation*}
Without loss of generality, assume $i = 1$. We will show that in this case the corresponding inequality is not only true but also strict.

Introduce unitary matrix $Z \in \mathbb{C}^{2 \times 2}$ such that
\begin{equation*}
V := UZ = \left( \begin{matrix}
v & 0 \\
v_{21} & v_{22} \\
\dots & \dots \\
v_{n1} & v_{n2}
\end{matrix} \right), \text{ where } |v|^2 = |u_{11}|^2 + |u_{12}|^2 \leq \frac{\alpha}{n}.
\end{equation*}
Proving the statement for matrix $V$ is equivalent to proving it for $U$. The case $v = 0$ follows immediately from the inductive hypothesis, so let us concentrate on the case $v \neq 0$.

Put $t = 1/\sqrt{1 - |v|^2} > 1$ and define the matrix $\tilde{V} \in \mathbb C^{(n-1) \times 2}$, which has orthonormal columns, by
\begin{equation*}
\tilde{V} = \left( \begin{matrix}
t v_{21} & v_{22} \\
\dots & \dots \\
t v_{n1} & v_{n2}
\end{matrix} \right).
\end{equation*}

By the inductive hypothesis, there exists a $2 \times 2$ submatrix of $\tilde{V}$ with the spectral norm of its inverse less than or equal to $\sqrt{(n-1)/\alpha}$. Assume that
\begin{equation*}
\tilde{V}_{ij} = \left( \begin{matrix}
t v_{i1} & v_{i2} \\
t v_{j1} & v_{j2}
\end{matrix} \right) =
\left( \begin{matrix}
v_{i1} & v_{i2} \\
v_{j1} & v_{j2}
\end{matrix} \right)
\left( \begin{matrix}
t & 0 \\
0 & 1
\end{matrix} \right)
\end{equation*}
is such a submatrix.

Since $||\tilde{V}_{ij}^{-1}||_2 = \sigma_2^{-1}(\tilde{V}_{ij})$ and applying the min--max characterization of singular values (see, e.g., \cite{HJ2012}), we get
\begin{equation*}
\sigma_2\left( \begin{matrix}
v_{i1} & v_{i2} \\
v_{j1} & v_{j2}
\end{matrix} \right) \geq \sigma_2(\tilde{V}_{ij}) \, \sigma_2\left( \begin{matrix}
1/t & 0 \\
0 & 1
\end{matrix} \right) \geq \sqrt{\frac{\alpha}{n-1}} \times \frac{1}{t}.
\end{equation*}

This implies
\begin{align*}
\sigma_2^2\left( \begin{matrix}
v_{i1} & v_{i2} \\
v_{j1} & v_{j2}
\end{matrix} \right) \geq \frac{\alpha}{n-1} &\times \frac{1}{t^2} = \alpha \frac{1-|v|^2}{n-1} \geq \\ \geq \frac{\alpha(1 - \alpha/n)}{n-1} = &\frac{\alpha}{n} \frac{n-\alpha}{n-1} > \frac{\alpha}{n},
\end{align*}
which is equivalent to the strict version of the desired inequality.

Since the obtained inequality is strict, there is no need to analyze the equality case.

\textbf{Case B.} The norms of the rows of $U$ are large enough. Specifically\footnote{For distinction, $\langle\cdot,\cdot\rangle$ and $||\cdot||$ denote the standard Hermitian inner product and the corresponding norm on $\mathbb C^2$, whereas $(\cdot,\cdot)$ and $|\cdot|$ denote the standard Euclidean inner product and the corresponding norm on $\mathbb R^3$.},
\begin{equation}\label{assm}
||u_i||^2 = |u_{i1}|^2 + |u_{i2}|^2 > \frac{\alpha}{n} \text{ for all } i = 1, \ldots, n.
\end{equation}

Denote the rows of $U$ by $u_1, \ldots, u_n$ and their Hopf images by $w_1, \ldots, w_n \in~\mathbb{R}^3$.

By assumption \eqref{assm},
\begin{equation*}
|w_i| = ||u_i||^2 > \alpha/n.
\end{equation*}

Moreover, since $U^HU = I$, the following holds
\begin{equation}\label{ws}
\sum_{i = 1}^{n} w_i = 0, \quad \sum_{i = 1}^{n} | w_i | = 2.
\end{equation}

Also, similar to \cite{SP2026} we have
\begin{equation}\label{sqdot}
|\langle u_i, u_j\rangle|^2 = \frac{1}{2} | w_i | | w_j | + \frac{1}{2} (w_i, w_j), \text{ for all } i,j = 1, \ldots, n.
\end{equation}

Let us show that there exist distinct $i$ and $j$, such that
\begin{equation}\label{trgt}
|\langle u_i, u_j\rangle|^2 \leq (|| u_i ||^2 - \frac{\alpha}{n}) (|| u_j ||^2 - \frac{\alpha}{n}).
\end{equation}

Applying \eqref{sqdot} we get the equivalent inequality
\begin{equation*}
(w_i, w_j) - (| w_i | - \frac{2\alpha}{n}) (| w_j | - \frac{2\alpha}{n}) + \frac{2\alpha^2}{n^2} \leq 0.
\end{equation*}

By Lemma \ref{le}, inequality \eqref{trgt} is satisfied for some $i$ and $j$. Applying assumption \eqref{assm} to \eqref{trgt}, we conclude that $i \neq j$.

Let us consider the corresponding submatrix
\begin{equation*}
U_{ij} = \left( \begin{matrix}
u_{i1} & u_{i2} \\
u_{j1} & u_{j2}
\end{matrix} \right).
\end{equation*}

The characteristic polynomial of its Gram matrix $\Gamma_{ij} = U_{ij} U_{ij}^H$ is 
\begin{equation*}
\chi(\lambda) = (||u_i||^2 - \lambda) (||u_j||^2 - \lambda) - |\langle u_i, u_j\rangle|^2.
\end{equation*}

In Case B, $\mathrm{tr} \, \Gamma_{ij} = ||u_i||^2 + ||u_j||^2 > 2\alpha/n$. Hence $\lambda_1(\Gamma_{ij}) > \alpha/n$.

Since by \eqref{trgt} $\chi(\alpha/n) \geq 0$ and $\chi(\alpha/n) = (\lambda_1(\Gamma_{ij}) - \alpha/n)(\lambda_2(\Gamma_{ij}) - \alpha/n)$, we get
\begin{equation*}
\lambda_2(\Gamma_{ij}) \geq \frac{\alpha}{n}
\end{equation*}
and the desired inequality
\begin{equation*}
||U_{ij}^{-1}||_2 \leq \sqrt\frac{n}{\alpha}.
\end{equation*}

Let us now analyze the equality case. It consists in the fact that 
\begin{equation}\label{mx}
\max_{i \neq j} \lambda_2(\Gamma_{ij}) = \frac{\alpha}{n}.
\end{equation}

In terms of matrix $M$ from Lemma \ref{le}, \eqref{mx} means that $M$ is entrywise nonnegative and has at least one zero entry.

Repeating the argument leading to \eqref{eq:upper-bound} but with non-strict inequalities, gives
\begin{equation*}
F \leq \frac{8\alpha}{n}\left(R_2-\frac{\alpha}{n}\right).
\end{equation*}

Comparing this with \eqref{eq:lower-bound-final}, we obtain equality in \eqref{eq:lower-bound-final}.

Together with \eqref{eq:lower-bound-raw} and \eqref{fsum}, this gives $R_2 = 4/n$, which allows us to apply the equality criteria for the Cauchy's inequality \eqref{R2}. It gives
\begin{equation}\label{eqrs}
r_1 = \ldots = r_n = \frac{2}{n}.
\end{equation}

Thus, \eqref{Pe} takes the form
\begin{equation*}
P e = \frac{4}{n} e.
\end{equation*}
Hence, $\mathrm{Span}(e)$ and $\mathrm{Span}(e)^\perp$ are invariant subspaces of $P$.

Since in \eqref{eq:lower-bound-preraw} and \eqref{eq:trinity} the equalities also hold, matrix $C$ has exactly $3$ negative eigenvalues equal to $-s/3 = -4/(3n)$ (which implies that $n-1 \geq 3$), while the other eigenvalues are zeros.

Therefore, the spectrum of $P$ is
\begin{equation}\label{specP}
\frac{4}{n}, -\frac{4}{3n}, -\frac{4}{3n}, -\frac{4}{3n}, \, 0, \, \ldots, \, 0.
\end{equation}

Using \eqref{eqrs} and $\alpha^2-4\alpha+8/3=0$, rewrite
\begin{align*}
M &= -P + \frac{2\alpha}{n}(r\mathbf{1}^T + \mathbf{1}r^T) - \frac{2\alpha^2}{n^2} \mathbf{1}\mathbf{1}^T = \\
&= -P + \frac{16}{3n^2} \mathbf{1}\mathbf{1}^T = -P + \frac{16}{3n} ee^T.
\end{align*}

Together with \eqref{specP}, this determines the spectrum of $M$
\begin{equation*}
\frac{4}{3n}, \, \frac{4}{3n}, \, \frac{4}{3n}, \, \frac{4}{3n}, \, 0, \, \ldots, \, 0.
\end{equation*}

Since $M$ is real symmetric and entrywise nonnegative, after a simultaneous permutation of rows and columns it is block diagonal with irreducible nonnegative diagonal blocks. By the Perron-Frobenius theorem, each nonzero irreducible block has a simple positive dominant eigenvalue (see, for example, \cite{BP1994}). Since the spectrum of $M$ contains exactly four positive eigenvalues, all equal to $4/(3n)$, and since $M_{ii} > 0$ for every $i$, there are exactly four such blocks, each of rank one.

Therefore, $M = f_1 f_1^T + \ldots + f_4 f_4^T$ for certain nonnegative orthogonal vectors $f_1, \ldots, f_4 \in \mathbb{R}^{n \times 1}$. Since $\mathbf{1} \in \mathrm{Span}(f_1, \ldots, f_4)$, we can state that up to a simultaneous permutation of rows and columns
\begin{equation}\label{blck}
M = \frac{4}{3n} \times \mathrm{diag}(\frac{1}{n_1}E_{n_1}, \frac{1}{n_2}E_{n_2}, \frac{1}{n_3}E_{n_3}, \frac{1}{n_4}E_{n_4}),
\end{equation}
where $n_1 + \ldots + n_4 = n$ and $E_{n_1}, \ldots, E_{n_4}$ are the all-ones matrices of the corresponding sizes.

On the other hand, $M_{ii} = 16/(3n^2)$ for every $i$. Hence,
\begin{equation*}
n_1 = \ldots = n_4 = \frac{n}{4}.
\end{equation*}

Also, if $w_i$ and $w_j$ correspond to the same block (cluster)
\begin{equation*}
P_{ij} = |w_i||w_j| - (w_i, w_j) = 0,
\end{equation*}
which together with $r_i = r_j$ gives $w_i = w_j$. This yields
\begin{equation*}
|\langle u_i, u_j\rangle|^2 = |w_i||w_j| = ||u_i||^2||u_j||^2,
\end{equation*}
which means that $u_i$ and $u_j$ are collinear, and therefore $\lambda_2(\Gamma_{ij}) = 0$.

If $w_i$ and $w_j$ correspond to different blocks, we have
\begin{equation*}
P_{ij} = |w_i||w_j| - (w_i, w_j) = \frac{16}{3n^2},
\end{equation*}
which gives $\cos\angle(w_i, w_j) = -1/3$ and thus establishes the necessity of the formulated equality condition. 

Conversely, suppose that $4 \mid n$ and the Hopf images consist of four clusters of size $n/4$, each vector having length $2/n$, with inter-cluster angles satisfying $(w_i, w_j) = -4/(3n^2)$. Then for two rows of matrix $U$ corresponding to different clusters,
\begin{equation*}
|\langle u_i, u_j\rangle|^2 = \frac{1}{2}|w_i||w_j| + \frac{1}{2}(w_i, w_j) = \frac{4}{3n^2}.
\end{equation*}
Hence, the two eigenvalues of the corresponding Gram matrix $\Gamma_{ij}$ are
\begin{equation*}
\frac{2}{n} \pm \frac{2}{n\sqrt3},
\end{equation*}
and the smaller one is
\begin{equation*}
\frac{2-2/\sqrt3}{n} = \frac{\alpha}{n}.
\end{equation*}

In conclusion, consider the optimal constants for the discussed spectral norms
\begin{equation*}
b_n = \frac{1}{\sqrt{a_n}}, \text{ where } \, a_n = \inf_{U^HU=I_2} \max_{i \neq j} \lambda_2(U_{ij} U_{ij}^H).
\end{equation*}

Since $(b_n)_{n \geq 3}$ is nondecreasing and $b_{n} = \sqrt{n/\alpha}$ for $4 \mid n$,
\begin{equation}\label{conv}
\lim\limits_{n\rightarrow\infty} \frac{\sqrt{n/\alpha}}{b_n} = 1,
\end{equation}
which establishes the asymptotic tightness of $\sqrt{n/\alpha}$ and completes the proof of Proposition \ref{pr}.

\section{Acknowledgements}

I would like to thank Igor Makhlin, Stanislav Budzinskiy and the authors of \cite{GTZ1997} and \cite{SP2026} for fruitful discussions.

\bibliographystyle{plain}
\bibliography{lit}

\end{document}